\def\Speaker{$^{*}$\protect\footnotetext{$^{*}$ S\lowercase{peaker.}}}
\def\authorsaddresses#1{\dedicatory{#1}}
\newtheorem{theorem}{Theorem}[section]
\theoremstyle{definition}
\newtheorem{definition}[theorem]{Definition}
\theoremstyle{remark}
\numberwithin{equation}{section}
\begin{document}
\setcounter{page}{1}

\noindent {\footnotesize The Extended Abstracts of \\
The 4$^{\rm th}$ Seminar on Functional Analysis and its Applications\\
2-3rd March 2016, Ferdowsi University of Mashhad, Iran}\\[1.00in]

\title[Relative Reproducing Kernels in Vector-Valued Hilbert and Banach Spaces]{Relative Reproducing Kernels in Vector-Valued Hilbert and Banach Spaces}

\author[Ebadian, Hashemi Sababe]{Ali Ebadian$^1$,\Speaker Saeed Hashemi Sababe$^2$}

\authorsaddresses{$^1$ Department of Mathematics, Payame Noor University (PNU), Iran;\\
Ebadian.ali@gmail.com\\
\vspace{0.5cm} $^2$ Department of Mathematics, Payame Noor University (PNU), Iran;\\
Hashemi\underline{~}1365@yahoo.com}
\subjclass[2010]{Primary 47B32; Secondary 47A70.}

\keywords{Hilbert space, reproducing kernels, semi-normed spaces.}

\begin{abstract}
This paper is devoted to the study of vector valued reproducing
kernel Hilbert spaces. We focus on reproducing kernels in vector-valued reproducing kernel Hilbert spaces.
In particular we extend reproducing kernels to relative reproducing kernels and prove some theorems in this subject.
\end{abstract}

\maketitle


\section{Introduction}

The purpose of this paper is to establish the notion of vector-valued relative reproducing kernels in Hilbert and Banach spaces. In learning theory, reproducing kernel Hilbert spaces (RKHS) and reproducing kernel Banach spaces (RKBS) are important tools for designing learning algorithms. The basic object is a Hilbert or Banach space of functions $f$ from a set $X$ into a normed vector space $\mathcal{Y}$ with the property that, for any $x \in X, \vert f(x)\vert \leq C_x \vert f \vert$  for a positive constant $C_x$ independent of $f$. In continues, we briefly review basic definitions. More details can be found in \cite{1,2,3,4,5}. \par
Given a set $X$ and a normed vector space $\mathcal{Y}$, a map $K : X \times X \rightarrow \mathcal{L}(\mathcal{Y})$ is called a $\mathcal{Y}$-reproducing kernel if
\begin{equation}
\sum_{i,j=1}^n \langle K(x_i,x_j)y_j,y_i\rangle  \geq 0 \label{1}
\end{equation}
or any $x_1, \dots , x_n$ in $X$, $y_1, \dots , y_n$ in $\mathcal{Y}$ and $n \geq 1$. Given $x \in X$, $K_x : \mathcal{Y} \rightarrow \mathcal{F}(X,\mathcal{Y})$ denotes the linear operator whose action on a vector $y \in \mathcal{Y}$ is the function $K_{xy} \in \mathcal{F}(X,\mathcal{Y})$ defined by
\begin{equation}
(K_{xy})(t) = K(t, x)y \qquad t \in X. \label{2}
\end{equation}
Given a $\mathcal{Y}$-reproducing kernel $K$, there is a unique Hilbert space $\mathcal{H}_K \subset \mathcal{F}(X, \mathcal{Y})$ satisfying
\begin{eqnarray}
&K_x \in L(\mathcal{Y}, \mathcal{H}_K)\qquad  x \in X \\
&f(x) = K_x^∗f \qquad  x \in X, f \in \mathcal{H}_K, \label{3}
\end{eqnarray}
where $K_x^* : \mathcal{H}_K \rightarrow \mathcal{Y}$ is the adjoint of $K_x$. The space $\mathcal{H}_K$ is called the reproducing kernel Hilbert space associated with $K$, the corresponding scalar product and norm are denoted by $\langle ., . \rangle_K$ and $\Vert . \Vert_K$,
respectively. As a consequence of $(\ref{3})$, we have that
\begin{eqnarray}
&K(x, t) = K_x^∗K_t \qquad  x, t \in X \\
&\mathcal{H}_K = \overline{span} \{K_{xy} \vert x \in X, y \in Y \} .
\end{eqnarray}
Let $\mathcal{H}$ be a Hilbert space of functions defined on the set $X$. We say that it is a relative reproducing kernel Hilbert space if there exists a function $M_{x,y}$ from $X \times X$ into $\mathcal{H}$ such that
\begin{equation}
 F(x) - F(y) = \langle F, M_{x,y},\mathcal{H}\rangle \qquad \forall x, y \in X , \forall F \in \mathcal{H}.
\end{equation} \par
A semi-inner product on a Banach space
$V$ is a function from $V \times V$ to $\mathbb{C}$, denoted by $[ ., .]_V$, such that for all $u, v, w \in V$ and $\alpha , \beta \in \mathbb{C}$
\begin{enumerate}
\item (linearity ) $[\alpha f + \beta g, h]_V = \alpha [f, h]_V + \beta [g, h]_V$;
\item (positivity) $[f, f]_V > 0$ for $f \neq 0$;
\item (conjugate homogeneity) $[f, \alpha g]_V = \alpha [f, g]_V$;
\item (Cauchy-Schwartz inequality) $\vert  [f, g]_V\vert \leq  [f, f]_V^{1/2}[g, g]_V^{1/2}$.
\end{enumerate}
A semi-inner product $[., .]_V$ on $V$ is said to be compatible if
\begin{equation}
[f, f]_V^{1/2} = \Vert f\Vert_V \qquad \forall f \in V,
\end{equation}
where $\Vert . \Vert_V$ denotes the norm on $V$. Every Banach space has a compatible semi-inner product.
Let $[., .]_V$ be a compatible semi-inner product on $V$. Then one sees by the Cauchy-Schwartz inequality
that for each $f \in \mathcal{B}$, the linear functional $f^*$ on $V$ defined by
\begin{equation}
f^*(g) := [g, f]_V, \qquad g \in V
\end{equation}
is bounded on $V$. In other words, $f^*$ lies in the dual space $\mathcal{B}^∗$ of $\mathcal{B}$. Moreover, we have
\begin{equation}
\Vert f^*\Vert_{V^*} = \Vert f\Vert_V
\end{equation}
Introduce the duality mapping $\mathcal{I}_V$ from $V$ to $V^∗$ by setting
\begin{equation}
\mathcal{I}_V(f) := f^∗,\qquad f \in V.
\end{equation}
Let $\Lambda$ be a Banach space. A space $\mathcal{B}$ is called a Banach space of $\Lambda$-valued functions on $X$ if it consists of certain functions from $X$ to $\Lambda$ and the norm on $\mathcal{B}$ is compatible with point evaluations in the sense that
\begin{equation}
\Vert f\Vert_B = 0 \Longleftrightarrow  f(x) = 0 \quad \forall x \in X.
\end{equation}
We call $\mathcal{B}$ a $\Lambda$-valued RKBS on $X$ if both $\mathcal{B}$ and $\Lambda$ are uniform and $\mathcal{B}$ is a Banach space of functions from $X$ to $\Lambda$ such that for every $x \in X$, the point evaluation $\delta_x : \mathcal{B} \rightarrow \lambda$ defined by
\begin{equation}
\delta_x(f) := f(x), \qquad f \in \mathcal{B},
\end{equation}
is continuous from $\mathcal{B}$ to $\Lambda$.\\
\section{Main Results}
\begin{definition}
Let $X$ be set an arbitrary set, $\mathcal{Y}$ a normed vector space and $\mathcal{H}$ a vector-valued reproducing kernel Hilbert space with $K$ as its reproducing kernel satisfying in properties $(\ref{1},\ref{2})$. We say that it is a vector-valued relative reproducing kernel Hilbert space (RRKHS) if there exists a function $M_{x,y}$ such that
\begin{eqnarray}
&M_{x,y} \in \mathcal{L}(\mathcal{Y},\mathcal{H}) \qquad \forall x,y\in X \label{4}\\
&f(y)-f(x)=M_{x,y}^*f \qquad \forall x,y\in X, \forall f\in \mathcal{H}
\end{eqnarray}
where $M_{x,y}^* : \mathcal{H} \rightarrow \mathcal{Y}$ is the adjoint of $M_{x,y}$.
\end{definition}
\begin{definition}
Let $\mathcal{B}$ be a Banach space of $\Lambda$-valued functions on $X$. We call $\mathcal{B}$ a $\Lambda$-valued relative reproducing kernel Banach space (RRKBS) on $X$ if both $\mathcal{B}$ and $\Lambda$ are uniform and $\mathcal{B}$ is a Banach space of functions from $X$ to $\Lambda$ such that for every $x \in X$, the function $\zeta_{x,y} : \mathcal{B} \rightarrow \lambda$ defined by
\begin{equation}
\zeta_{x,y}(f) := f(y)-f(x), \qquad \forall f \in \mathcal{B}, \label{6}
\end{equation}
is continuous from $\mathcal{B}$ to $\Lambda$.
\end{definition}
\begin{theorem}\label{t1}
The function $M_{x,y}$ in (\ref{4}) is unique and satisfies
\begin{equation}
 M_{x_1,x_2}(t) + M_{x_2,x_3}(t) = M_{x_1,x_3}(t)\qquad \forall x_1, x_2, x_3, t \in X. \label{5}
\end{equation}
\end{theorem}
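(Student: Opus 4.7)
The plan is to reduce both claims to the defining reproducing identity
\[
f(y) - f(x) = M_{x,y}^{*} f, \qquad \forall f \in \mathcal{H},
\]
together with the standard fact that, since $\mathcal{H}$ is Hilbert and $\mathcal{Y}$ carries an inner product (implicit in the scalar product appearing in (\ref{1})), an operator in $\mathcal{L}(\mathcal{Y},\mathcal{H})$ is uniquely determined by its adjoint in $\mathcal{L}(\mathcal{H},\mathcal{Y})$.

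For uniqueness I would suppose that both $M_{x,y}$ and another candidate $\widetilde{M}_{x,y}$ satisfy the RRKHS identity. Subtracting one from the other yields $M_{x,y}^{*} f = \widetilde{M}_{x,y}^{*} f$ for every $f \in \mathcal{H}$, so the two adjoints coincide as bounded operators $\mathcal{H} \to \mathcal{Y}$; taking adjoints once more returns $M_{x,y} = \widetilde{M}_{x,y}$.

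For the cocycle identity I would fix $x_{1}, x_{2}, x_{3} \in X$ and $f \in \mathcal{H}$ and write
\[
f(x_{3}) - f(x_{1}) = \bigl(f(x_{3}) - f(x_{2})\bigr) + \bigl(f(x_{2}) - f(x_{1})\bigr).
\]
Three applications of the reproducing identity, combined with the linearity of the adjoint, give
\[
M_{x_{1},x_{3}}^{*} f \;=\; M_{x_{2},x_{3}}^{*} f + M_{x_{1},x_{2}}^{*} f \;=\; \bigl(M_{x_{1},x_{2}} + M_{x_{2},x_{3}}\bigr)^{*} f.
\]
Since $f \in \mathcal{H}$ is arbitrary, $M_{x_{1},x_{3}}^{*} = (M_{x_{1},x_{2}} + M_{x_{2},x_{3}})^{*}$, and by the uniqueness step (equivalently, a second adjoint) we recover $M_{x_{1},x_{3}} = M_{x_{1},x_{2}} + M_{x_{2},x_{3}}$ in $\mathcal{L}(\mathcal{Y},\mathcal{H})$. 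Evaluating this operator identity at an arbitrary $t \in X$, where $M_{x_{i},x_{j}}(t)$ is read as the operator $y \mapsto (M_{x_{i},x_{j}} y)(t)$ on $\mathcal{Y}$, yields the stated pointwise relation (\ref{5}).

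I do not foresee a substantive obstacle; the argument is essentially formal once one accepts the reproducing identity and the injectivity of the adjoint map on $\mathcal{L}(\mathcal{Y},\mathcal{H})$. The only mild subtlety is interpretational, namely keeping straight that $M_{x,y}$ lives natively as an operator $\mathcal{Y}\to\mathcal{H}$ while the cocycle is phrased pointwise in $t$; this is reconciled by composing with the (continuous) point evaluation $\delta_{t}$ furnished by the kernel $K_{t}$.
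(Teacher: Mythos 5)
Your proposal is correct and follows essentially the same route as the paper: uniqueness comes from the fact that $M_{x,y}$ is determined by its adjoint's action on all of $\mathcal{H}$ (the paper phrases this as an appeal to Riesz' representation theorem), and the cocycle identity (\ref{5}) comes from the same telescoping decomposition $f(x_3)-f(x_1)=\bigl(f(x_3)-f(x_2)\bigr)+\bigl(f(x_2)-f(x_1)\bigr)$ combined with that uniqueness. Your write-up is in fact more explicit than the paper's about the adjoint bookkeeping and the pointwise reading of $M_{x_i,x_j}(t)$, but the underlying argument is identical.
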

\begin{proof}
The uniqueness of the function $M_{x_1,x_2}$ follows from Riesz’ representation theorem. This uniqueness and following equality imply (\ref{5}).
\begin{equation}
f(x_1) - f(x_3) = f(x_1) - f(x_2) + f(x_2) - f(x_3)
\end{equation}
\end{proof}
\begin{theorem}
Previews theorem also holds in RRKBS but the function is not necessarily unique in these spaces.
\end{theorem}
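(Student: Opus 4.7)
The plan is to mirror the proof of Theorem~\ref{t1} step by step in the Banach setting, replacing the appeal to Riesz' representation theorem by the semi-inner product representation available in any Banach space with a compatible semi-inner product $[\cdot,\cdot]_{\mathcal{B}}$. The algebraic skeleton
\[
f(x_3) - f(x_1) = \bigl(f(x_2) - f(x_1)\bigr) + \bigl(f(x_3) - f(x_2)\bigr)
\]
is purely set-theoretic and carries over unchanged; what must be re-examined is how this identity is transported to the representatives of the evaluation-difference functionals $\zeta_{x,y}$ of~(\ref{6}).

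First I would invoke the continuity of each $\zeta_{x,y}$ granted by the RRKBS definition to obtain $\zeta_{x,y}^{*}\in\mathcal{B}^{*}$. Using the duality map $\mathcal{I}_{\mathcal{B}}$ associated with a fixed compatible semi-inner product, I would select an element $N_{x,y}\in\mathcal{B}$ such that $f(y)-f(x)=[f,N_{x,y}]_{\mathcal{B}}$ for every $f\in\mathcal{B}$; in the uniform RKBS framework this selection is possible because $\zeta_{x,y}^{*}$ lies in the range of $\mathcal{I}_{\mathcal{B}}$. Applying the algebraic identity to $f$, expressing each term through this representation, and using linearity of $[\cdot,\cdot]_{\mathcal{B}}$ in the first argument yields the Banach counterpart of~(\ref{5}) for the triple $N_{x_1,x_2}, N_{x_2,x_3}, N_{x_1,x_3}$, which is the analog of the previous theorem that the statement asserts.

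The non-uniqueness clause is the one genuinely new point. In the Hilbert setting Riesz' theorem delivers a \emph{linear} isometric bijection $\mathcal{H}\to\mathcal{H}^{*}$, and this linearity combined with injectivity is precisely what forces uniqueness of $M_{x,y}$. In the Banach setting the duality map $\mathcal{I}_{\mathcal{B}}$ is only norm-preserving; it is generally nonlinear, and it fails to be injective as soon as $\mathcal{B}$ is not strictly convex. Two distinct elements of $\mathcal{B}$ may then induce the same functional via $[\cdot,\cdot]_{\mathcal{B}}$, so a representative $N_{x,y}$ of $\zeta_{x,y}$ need not be unique. The hard part will be making this failure concrete. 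I would either produce an abstract perturbation argument, modifying a candidate $N_{x,y}$ along a direction in which the semi-inner product pairing with every $f\in\mathcal{B}$ vanishes, or else exhibit an explicit $\ell^{1}$- or $L^{1}$-type RRKBS in which two distinct representatives of a single $\zeta_{x,y}$ can be written down by hand.
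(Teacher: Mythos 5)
The paper's own proof of this theorem is literally the phrase ``similarly to Theorem~\ref{t1}'', followed by an unproved remark that Riesz' representation theorem fails in general Banach spaces; so you are attempting considerably more than the authors do, and in doing so you expose two genuine gaps in your own plan. The first is the step where you pass from the telescoping identity to ``the Banach counterpart of \eqref{5} for the triple $N_{x_1,x_2}, N_{x_2,x_3}, N_{x_1,x_3}$.'' What the representation actually gives you is $[f,N_{x_1,x_2}]_{\mathcal{B}}+[f,N_{x_2,x_3}]_{\mathcal{B}}=[f,N_{x_1,x_3}]_{\mathcal{B}}$ for every $f\in\mathcal{B}$, and to conclude $N_{x_1,x_2}+N_{x_2,x_3}=N_{x_1,x_3}$ you would need the semi-inner product to be additive in its \emph{second} argument; by the axioms listed in the introduction it is only linear in the first argument and conjugate-homogeneous in the second. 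In a Hilbert space, additivity in the second slot is exactly what the Riesz map secretly supplies, and it is the missing ingredient here: linearity in the first argument, which you invoke, does nothing for this step. The additive identity does hold trivially at the level of the functionals themselves, $\zeta_{x_1,x_2}+\zeta_{x_2,x_3}=\zeta_{x_1,x_3}$ in $\mathcal{B}^{*}$, and that is the only version of ``the previous theorem also holds'' that survives without further hypotheses; you should formulate the Banach analog there rather than for the representatives.

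The second gap concerns your plan for non-uniqueness. An $\ell^{1}$- or $L^{1}$-type example collides with the definition of RRKBS you are working under: $\mathcal{B}$ is required to be \emph{uniform} (uniformly convex and uniformly Fr\'echet smooth), which excludes $\ell^{1}$ and $L^{1}$ and in fact makes the duality map $\mathcal{I}_{\mathcal{B}}$ a bijection, so the semi-inner-product representative of $\zeta_{x,y}$ \emph{is} unique in that setting. Non-uniqueness can only be meant for representations not tied to the fixed compatible semi-inner product, or for spaces outside the uniform hypothesis. The paper does not resolve this tension either --- its follow-up remark about Riesz' theorem is no more than a heuristic --- but the concrete examples you propose would not satisfy the definition you are quoting, so as written the non-uniqueness clause remains unproved.
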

\begin{proof}
similarly to theorem \ref{t1}.
\end{proof}
Obvously the Riesz’ representation theorem does not hold in general Banach spaces but with adding some conditions, this theorem can be hold. So in general case, the above uniqueness does not hold. \begin{theorem}
Let $\mathcal{B}$ be a $RRKBS$ on a locally compact Hausdorf Banach space $\Lambda$. Then the function defined in (\ref{6}) is unique.
\end{theorem}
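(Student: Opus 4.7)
The plan is to reduce the assertion to the Hilbert-style argument of Theorem~\ref{t1} by first extracting finite-dimensional structure from the hypothesis on $\Lambda$. The starting observation is that a Banach space which is locally compact as a topological space is automatically finite-dimensional; this is F.~Riesz's classical characterization via local compactness of a neighborhood of the origin. Hence $\Lambda$ is finite-dimensional, which dramatically improves the available duality and justifies treating the vector-valued problem coordinate-wise.

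I would then fix $x,y\in X$ and consider the bounded linear map $\zeta_{x,y}:\mathcal{B}\to\Lambda$, which exists and is continuous by the definition of RRKBS. Selecting a basis $e_1,\dots,e_n$ of $\Lambda$ with dual basis $e_1^{*},\dots,e_n^{*}$ in $\Lambda^{*}$, the compositions $\phi_i:=e_i^{*}\circ\zeta_{x,y}$ are bounded linear functionals on $\mathcal{B}$. Using the compatible semi-inner product $[\,\cdot\,,\,\cdot\,]_{\mathcal{B}}$ together with the duality mapping $\mathcal{I}_{\mathcal{B}}$ introduced in the preliminaries, each $\phi_i$ should admit a representer $g_i\in\mathcal{B}$ with $\phi_i(f)=[f,g_i]_{\mathcal{B}}$ for every $f\in\mathcal{B}$. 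Assembling the $g_i$ yields the RRKBS analog of the function $M_{x,y}$, and the finite-dimensional reconstruction $\zeta_{x,y}(f)=\sum_i [f,g_i]_{\mathcal{B}}\, e_i$ makes the representer intrinsic.

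Uniqueness of each $g_i$ --- and hence of the assembled representer --- is the crux. In a general Banach space the duality mapping is multi-valued, which is precisely why the previous theorem fails to give uniqueness in arbitrary RRKBS. However, the uniformity of $\mathcal{B}$ that is built into the definition of RRKBS forces $\mathcal{I}_{\mathcal{B}}$ to be a bijection between $\mathcal{B}$ and $\mathcal{B}^{*}$, so each $\phi_i$ pins down a single $g_i$. Combined with the fact that the basis expansion of $\zeta_{x,y}(f)$ in the finite-dimensional space $\Lambda$ is unique, this yields uniqueness of the representer.

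The main obstacle will be articulating precisely the sense in which ``uniqueness'' is being asserted, since (6) itself defines $\zeta_{x,y}$ by an explicit formula and the genuine content must concern its representing object by analogy with Theorem~\ref{t1}. A secondary delicate point is verifying that the two hypotheses --- local compactness of $\Lambda$ and uniformity of $\mathcal{B}$ --- are being used in the correct places: the former to collapse the vector-valued problem into finitely many scalar problems, the latter to make each scalar representer unique. Once $\dim\Lambda<\infty$ is secured, the argument reduces to $n$ parallel applications of a Riesz-type representation at the scalar level and mirrors Theorem~\ref{t1} coordinate-wise.
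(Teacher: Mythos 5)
Your argument reaches the same conclusion but by a genuinely different and far more explicit route than the paper. The paper's entire proof is a one-line appeal to ``a version of Riesz' representation theorem'' together with a citation of Theorem 1 of \cite{3} (an unpublished companion paper), so it gives no indication of how the hypothesis on $\Lambda$ is actually consumed. You instead unpack the hypothesis directly: F.~Riesz's classical theorem that a locally compact normed space is finite-dimensional turns the strange-sounding assumption ``locally compact Hausdorff Banach space $\Lambda$'' into $\dim\Lambda<\infty$, after which the vector-valued evaluation $\zeta_{x,y}$ splits into finitely many scalar functionals $e_i^{*}\circ\zeta_{x,y}$, each represented via the compatible semi-inner product, with uniqueness of each representer coming from the bijectivity of the duality mapping on the uniform space $\mathcal{B}$. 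This buys a self-contained, checkable argument where the paper offers only a pointer, and your reading is the most charitable way to make the hypothesis meaningful. Two caveats are worth recording. First, as you yourself note, equation (\ref{6}) defines $\zeta_{x,y}$ by an explicit formula, so the uniqueness being claimed can only concern the representing object, and neither you nor the paper can avoid restating the theorem to make that precise. Second, your analysis exposes a tension in the paper: since every RRKBS is uniform by definition and uniformity already makes $\mathcal{I}_{\mathcal{B}}$ injective, the uniqueness of each scalar representer does not actually need local compactness of $\Lambda$ --- that hypothesis is used only to reduce to finitely many scalar problems --- which sits awkwardly with the preceding theorem's assertion that the representer is ``not necessarily unique'' in a general RRKBS. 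You should either resolve that tension or state explicitly which ingredient (reflexivity/smoothness of $\mathcal{B}$ versus finite-dimensionality of $\Lambda$) is responsible for existence and which for uniqueness.
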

\begin{proof}
In this case, a version of Riesz’ representation theorem holds and it implies the uniqueness. see Theorem 1 in \cite{3}.
\end{proof}
\begin{theorem}
Let $\mathcal{H}$ be a Hilbert space and there exist functions $K$ and $M$ with reproducing and relative reproducing property, respectively, then
\begin{equation}
\mathcal{H}_M \subseteq \mathcal{H}_K \subseteq \mathcal{H}
\end{equation}
\end{theorem}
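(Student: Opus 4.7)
The plan is to establish the two inclusions separately, reducing the whole statement to a single pointwise identity between a generator of $\mathcal{H}_M$ and a difference of generators of $\mathcal{H}_K$. I would begin by fixing the working definition $\mathcal{H}_M := \overline{\mathrm{span}}\{M_{x,y}y':\ x,y\in X,\ y'\in\mathcal{Y}\}$, in analogy with the construction of $\mathcal{H}_K$ recalled in the introduction.

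The inclusion $\mathcal{H}_K\subseteq\mathcal{H}$ is immediate: since $K$ is assumed to reproduce $\mathcal{H}$, property $(\ref{3})$ gives $K_x\in L(\mathcal{Y},\mathcal{H})$ for every $x$, so each generator $K_{xy}$ already lies in $\mathcal{H}$, and the closed linear span inside the Hilbert space $\mathcal{H}$ remains inside $\mathcal{H}$.

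For the remaining inclusion, the core step is to check the identity
\[
M_{x,y}y' \;=\; K_{yy'} - K_{xy'}\qquad \text{in } \mathcal{H},
\]
valid for every $x,y\in X$ and every $y'\in\mathcal{Y}$. I would prove it by pairing both sides against an arbitrary $f\in\mathcal{H}$. The left side yields $\langle y',M_{x,y}^{*}f\rangle_{\mathcal{Y}} = \langle y',f(y)-f(x)\rangle_{\mathcal{Y}}$ by the relative reproducing property, while the right side splits as $\langle y',K_{y}^{*}f\rangle_{\mathcal{Y}} - \langle y',K_{x}^{*}f\rangle_{\mathcal{Y}} = \langle y',f(y)-f(x)\rangle_{\mathcal{Y}}$ by the ordinary reproducing property. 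Nondegeneracy of $\langle\cdot,\cdot\rangle_{\mathcal{H}}$ then forces the equality above in $\mathcal{H}$, so every generator of $\mathcal{H}_M$ sits in $\mathcal{H}_K$, and closedness of $\mathcal{H}_K$ upgrades this to $\mathcal{H}_M\subseteq\mathcal{H}_K$.

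The main, and essentially only, obstacle is making sure that the adjoints $K_x^{*}$ and $M_{x,y}^{*}$, which a priori are associated with possibly different Hilbert structures, may legitimately be paired against the same scalar product. This is exactly what the uniqueness clause of the kernel construction provides: since $\mathcal{H}$ admits $K$ as a reproducing kernel it must coincide with $\mathcal{H}_K$ as a Hilbert space, so both adjoints are formed with respect to the common inner product $\langle\cdot,\cdot\rangle_{\mathcal{H}} = \langle\cdot,\cdot\rangle_{K}$, and the computation above is justified.
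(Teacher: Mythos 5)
The paper states this theorem and then stops: no proof is given at all (the statement is followed directly by the acknowledgements), so there is no argument of the authors' to compare yours against. Judged on its own terms, your proof is correct and supplies exactly the missing content. The inclusion $\mathcal{H}_K\subseteq\mathcal{H}$ is indeed immediate from $K_x\in L(\mathcal{Y},\mathcal{H})$, and your key identity $M_{x,y}y'=K_{yy'}-K_{xy'}$, verified by pairing both sides against an arbitrary $f\in\mathcal{H}$ and using $M_{x,y}^{*}f=f(y)-f(x)$ together with $K_x^{*}f=f(x)$, is the right structural fact; it is the vector-valued analogue of the scalar relation $M_{x,y}=K_y-K_x$ that underlies the relative reproducing kernel spaces of \cite{1}. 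Two remarks. First, the paper never defines $\mathcal{H}_M$, so your opening move of fixing $\mathcal{H}_M:=\overline{\mathrm{span}}\{M_{x,y}y'\}$ is not optional bookkeeping but a necessary (and the natural) supplement; any honest proof must begin there. Second, your closing observation—that a function orthogonal to every $K_{xy}$ vanishes pointwise and hence is zero, so that $\mathcal{H}$ coincides with $\mathcal{H}_K$—does legitimately resolve the worry about which inner product the two adjoints refer to, but it also reveals that under the hypothesis as literally stated the second inclusion is an equality, so the chain carries content only in its first link $\mathcal{H}_M\subseteq\mathcal{H}_K$ (whose cokernel consists of the ``constant-like'' elements annihilated by all the functionals $f\mapsto f(y)-f(x)$). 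That is a weakness of the theorem as formulated, not of your argument.
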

\bigskip
\section*{Acknowledgement} Research supported in part by Kavosh Alborz Institute
of Mathematics and Applied Sciences and was accomplished while the second author was visiting the University of Iowa.

\hspace{1in}

\bibliographystyle{amsplain}

\end{document}